\documentclass[10pt,reqno]{amsart}
\author[H.~Kawanoue]{Hiraku Kawanoue} 
\address{
College of Science and Engineering, Chubu University
\newline 
Matsumoto-cho, Kasugai-shi, 
Aichi 487-8501, JAPAN
} 
\email{kawanoue@kurims.kyoto-u.ac.jp} 
\title{On 
the extended Catalan arrangement of type $B_\ell$}

\newcommand{\bZ}{\mathbb{Z}}
\newcommand{\bC}{\mathbb{C}}

\newcommand{\cA}{\mathcal{A}}
\newcommand{\cat}{\operatorname{Cat}}

\theoremstyle{plain}
\newtheorem{thm}{Theorem}
\newtheorem{lem}[thm]{Lemma}
\newtheorem{cor}[thm]{Corollary}
\newtheorem{prop}[thm]{Proposition}
\theoremstyle{Remark}

\usepackage{version,graphics,url,txfonts,times,bm,color}
\urlstyle{sf}
\begin{document}
\begin{abstract}
We give an explicit formula for the generators of the logarithmic 
vector field of the coning of the extended Catalan arrangement 
of type $B_\ell$.
\end{abstract}
\maketitle
\begin{section}{Introduction}
The extended Catalan arrangement is a class of finite truncation of 
an affine Weyl arrangement for a root system. 
In 2004, Yoshinaga \cite{Y} proved that the conings of the 
extended Catalan arrangements are free 
for all root systems. 
However, the explicit description of 
their logarithmic vector field are not well known 
for a long time.  

In 2021, Suyama and Yoshinaga \cite{SY} gave 
an explicit basis of the extended Catalan arrangement of type $A_\ell$.
In 2023, Feigin, Wang and Yoshinaga constructed 
a set of derivations in view of their integral expression 
of the basis of Coxeter arrangement of type $B_2$, and 
asked if it really gives 
a basis of the extended Catalan arrangement of type $B_2$ 
or not. 

In this article, we generalize their construction to 
the extended Catalan arrangement of type $B_\ell$ 
and confirm that they give a basis.
In particular, it settles the conjecture of 
Feigin, Wang and Yoshinaga in \cite{FWY} affirmatively .

\bigskip

We work over the complex number field $\mathbb C$.
We denote 
$$(a)^\downarrow_c=
\begin{cases}
\prod_{i=a-c+1}^ai & (c\geq0)
\\
\left\{(a-c)^\downarrow_{-c}\right\}^{-1}
& (c\leq0)
\end{cases},
\quad 
(b)^\uparrow_c=(b+c-1)^\downarrow_c,
\quad 
P(a,b)=(a+b)^\downarrow_{2b+1}.
$$
Let $\ell\in\bZ_{>0}$ and $m\in\bZ_{\geq0}$.  
The extended Catalan arrangement 
$\cat(B_\ell,m)$ of type $B_\ell$ is 
a hyperplane arrangement in $\bC^\ell$
defined by the following equation
$$
\Psi_{\ell,m}=\prod_{1\leq a\leq \ell}
P(x_a,m)
\prod_{1\leq b<c\leq \ell}
P(x_b+x_c,m)
P(x_b-x_c,m)
=0.
$$
We also consider 
a central arrangement $c\!\,\cat(B_\ell,m)$ in $\bC^{\ell+1}$ 
defined by $z\widetilde{\Psi}_{\ell,m}=0$, where 
$\widetilde{\ }$ means homogenization of a polynomial by 
supplying a new variable $z$.
It is called the coning of $\cat(B_\ell,m)$.
As stated above, it is known that 
$c\!\,\cat(B_\ell,m)$ is free.

For 
$\ell,m,u\in\bZ_{>0}$, we define 
\begin{align*}
\lefteqn{
h_{\ell,m,u}(x;\bm{y})=
h_{\ell,m,u}(x;y_1,y_2,\dotsc,y_{\ell})=
}
\\
&\sum_{0\leq s_1,s_2,\dotsc,s_\ell\leq m}
\dfrac{
(-1)^{S_\ell}P(x,m+u+S_\ell)
}
{
\binom{ m+u+S_\ell + \frac12}{m + 1}
}
\prod_{i=1}^\ell
\binom{m}{s_i}
\dfrac{
P(y_i,u + m+S_{i-1})
}{
P(y_i,u + S_{i})
}
,\end{align*}
where $S_k=\sum_{j=1}^ks_j$.  
For $\ell=0$, we interpret $h_{0,m,u}(x)$ as 
$$
h_{0,m,u}(x) =
\dfrac{P(x,m+u)
}{\binom{ m+u + \frac12}{m + 1}}.
$$
We define 
$$
F_{\ell,m,u,i}(x_1,\dotsc,x_\ell)=
h_{\ell-1,m,u}(x_i;x_1,\dotsc,\hat{x_{i}},\dotsc,x_{\ell})
\quad(1\leq i\leq \ell),
$$
where $\hat{x_i}$ means the omission of $x_i$, and 
$$\delta_{\ell,m,u}=
\sum_{i=1}^\ell \widetilde{F}_{\ell,m,u,i}(x_1,\dotsc,x_\ell)\partial_{ x_i}.
$$
We denote the Euler derivation as 
$\delta_E=z\partial_z+\sum_{i=1}^\ell x_i\partial_{x_i}$.
The following assertion is the main theorem of this article.
\begin{thm}\label{Main}
The set 
$\{\delta_E\}\cup
\{\delta_{\ell,m,u}\mid u=0,1,\dotsc,\ell-1\}$
forms a basis of 
$D(c\!\,\cat(B_\ell,m))$. 
\end{thm}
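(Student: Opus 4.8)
The plan is to apply Saito's criterion. Since $c\!\,\cat(B_\ell,m)\subset\bC^{\ell+1}$ and the proposed family has exactly $\ell+1$ members, it suffices to check two things: that every member lies in $D(c\!\,\cat(B_\ell,m))$, and that the determinant of their coefficient matrix is a nonzero constant multiple of the defining polynomial $z\widetilde{\Psi}_{\ell,m}$. Writing the coefficient matrix with columns indexed by $\partial_{x_1},\dots,\partial_{x_\ell},\partial_z$, only $\delta_E$ contributes to the last column (with entry $z$), so expanding along that column gives
\[
\det M=\pm\,z\,\det\bigl(\widetilde{F}_{\ell,m,u,i}\bigr)_{0\le u\le\ell-1,\;1\le i\le\ell}.
\]
Because homogenization is multiplicative, it is enough to work with the dehomogenized polynomials and to prove $\det\bigl(F_{\ell,m,u,i}\bigr)\doteq\Psi_{\ell,m}$ (here $F_{\ell,m,u,i}$ is a genuine polynomial, the apparent denominators $P(y_i,u+S_i)$ cancelling in the sum).

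\textbf{Membership.} The Euler derivation always lies in $D$, and $\delta_{\ell,m,u}(z)=0$ disposes of the hyperplane $z=0$. For a coordinate hyperplane $x_a+jz=0$ with $|j|\le m$ one has $\delta_{\ell,m,u}(x_a+jz)=\widetilde{F}_{\ell,m,u,a}$, so the requirement is $P(x_a,m)\mid F_{\ell,m,u,a}$; this is immediate, since the only occurrence of the first argument of $h_{\ell-1,m,u}$ is through the factor $P(x_a,m+u+S_{\ell-1})$, which is divisible by $P(x_a,m)$ termwise because $m+u+S_{\ell-1}\ge m$. The substantive conditions are those attached to the hyperplanes $x_b\pm x_c+jz=0$, namely $P(x_b\pm x_c,m)\mid(F_{\ell,m,u,b}\pm F_{\ell,m,u,c})$. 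As the $2m+1$ linear forms $x_b\pm x_c+j$ $(|j|\le m)$ are pairwise coprime, this amounts to the vanishing of $F_{\ell,m,u,b}\pm F_{\ell,m,u,c}$ along each wall $x_b=\mp x_c-j$.

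I expect this mixed divisibility to be the heart of the argument. The first step is to establish that $h_{\ell,m,u}(x;\bm{y})$ is symmetric in $y_1,\dots,y_\ell$, despite the asymmetric appearance of the partial sums $S_{i-1},S_i$; once this hidden symmetry is in hand the spectator variables $x_k$ $(k\neq b,c)$ enter $F_{\ell,m,u,b}$ and $F_{\ell,m,u,c}$ in the same way, and the problem collapses to a two-variable reflection identity of the form $P(x-y,m)\mid h(x;y,\bm{w})-h(y;x,\bm{w})$ and $P(x+y,m)\mid h(x;y,\bm{w})+h(y;x,\bm{w})$. These I would prove by exploiting the finite-difference structure of the inner alternating sum $\sum_{s}(-1)^{s}\binom{m}{s}(\cdots)$ together with the contiguity relation $P(a,c+1)=(a+c+1)(a-c-1)P(a,c)$ and the parity $P(-a,c)=-P(a,c)$, reducing each wall evaluation to a telescoping identity; an induction on $\ell$, peeling off one summation variable, organizes the bookkeeping. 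This is the natural $B_\ell$-analogue of the Weyl-group cocycle underlying the type $A_\ell$ computation of \cite{SY} and the $B_2$ integral expression of \cite{FWY}, and carrying it out in full generality is where the main difficulty lies.

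\textbf{The determinant.} Granting membership, the divisibility $\Psi_{\ell,m}\mid\det(F_{\ell,m,u,i})$ holds automatically, since the coefficient determinant of any family of logarithmic derivations is a multiple of the defining polynomial. A degree count then closes the gap: each $F_{\ell,m,u,i}$ has degree $2m\ell+2u+1$, so the determinant has degree $\sum_{u=0}^{\ell-1}(2m\ell+2u+1)=(2m+1)\ell^2=\deg\Psi_{\ell,m}$. Hence $\det(F_{\ell,m,u,i})=c\,\Psi_{\ell,m}$ for a constant $c$, and it remains only to verify $c\neq0$. I would do this by induction on $\ell$, expanding the determinant and using the recursion for $h$ to reduce it to a nonzero multiple of the $(\ell-1)$-dimensional determinant; the base case $\ell=1$ is the $2\times2$ determinant whose nonzero value is governed by the constant $1/\binom{m+\frac12}{m+1}$. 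Combined with Saito's criterion, $c\neq0$ shows that $\{\delta_E\}\cup\{\delta_{\ell,m,u}\}$ is a basis of $D(c\!\,\cat(B_\ell,m))$; note that the degrees $1,2m\ell+1,\dots,2m\ell+2\ell-1$ of these derivations already match the exponents forced by Yoshinaga's freeness result, which is a useful consistency check throughout.
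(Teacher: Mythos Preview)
Your overall architecture is the paper's: Saito's criterion, membership split into the easy coordinate walls and the hard mixed walls, the hidden symmetry of $h_{\ell,m,u}$ in the $y$-variables to reduce the mixed condition to $\ell=2$, and a degree count forcing the coefficient determinant to be a constant multiple of $\Psi_{\ell,m}$. Where you diverge is exactly at the step you flag as the heart of the matter, and there your description is too optimistic.

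For the two-variable divisibility $P(x_1+x_2,m)\mid h_{1,m,u}(x_1;x_2)+h_{1,m,u}(x_2;x_1)$ the paper does \emph{not} obtain each wall by a telescoping of the alternating $\sum_s(-1)^s\binom{m}{s}(\cdots)$. Instead it proves two separate facts and combines them. First (Proposition~\ref{prop2}), it shows that modulo the single top wall $(x+y+m)$ one has $h_{1,m,u}(x;y)\equiv A_{m,u}\,(x-m-u)^\uparrow_{3m+2u+1}(x+1/2)^\uparrow_m$, and that this expression is odd under $x\leftrightarrow y$; the key computation is an evaluation at the half-integer points $x=-k-\tfrac12$, $y=-m+k+\tfrac12$, which collapses to a ${}_3F_2$ sum handled by the Pfaff--Saalsch\"utz theorem (Lemma~\ref{lem1}). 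This is not a telescoping identity, and I do not see how to bypass the hypergeometric evaluation here. Second (Proposition~\ref{prop1}), a recurrence in $m$,
\[
\tfrac{2u-1}{2m+4}\,h_{1,m+1,u-1}=\bigl\{x^2+y^2-(m+u+1)^2-u^2\bigr\}h_{1,m,u}-2h_{1,m,u+1},
\]
is derived by discretizing the integration-by-parts identity for $\int_0^x t^{2u}(t^2-x^2)^m(t^2-y^2)^m\,dt$; this lets an induction on $m$ promote divisibility by the single factor $(x+y+m)$ to divisibility by the full rising product $(x+y)^\uparrow_{m+1}$, after which the parity $h_{1,m,u}(-x;-y)=-h_{1,m,u}(x;y)$ gives the remaining half of $P(x+y,m)$. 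So the base case really needs two nontrivial inputs---a ${}_3F_2$ evaluation and an $m$-recurrence coming from the continuous model---rather than finite-difference bookkeeping alone.

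One smaller point: for $c\neq0$ the paper does not induct on $\ell$. It simply reads off the coefficient of the monomial $z\prod_{i=1}^\ell x_i^{(2\ell-2i+1)(2m+1)}$ on both sides; only one term of each $h_{\ell-1,m,u}$ contributes to the top power of a given variable, and this yields $c=\prod_{i=0}^{\ell-1}(-1)^{im}\binom{(i+1)(m+1)-\tfrac12}{m+1}^{-1}\neq0$ directly. Your inductive plan would work, but it is more effort than needed.
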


The conjectural basis of $D(c\!\,\cat(B_2,m))$ 
constructed by Feigin, Wang and Yoshinaga \cite{FWY} 
is based on considering 
the discretization $\widetilde{f_u^m}(x,y)$ of the integral
$\int_0^xt^{2u}(t^2-x^2)^m(t^2-y^2)^m\,dt$.
Our construction is generalization of theirs.  
In particular, 
our $h_{1,m,u}$ corresponds to 
their $\widetilde{f_u^m}$.  Note that 
\begin{align*}
\lefteqn{
\int_0^xt^{2u}(t^2-x^2)^m(t^2-y^2)^m\,dt
=x^{4m+2u+1}\int_0^1r^{u}(r-1)^m
(r-x^{-2}y^2)^m\,\dfrac{dr}{2\sqrt{r}}
}
\\
&=\dfrac12x^{4m+2u+1}\sum_{s=0}^m
\binom{m}{s}
(-1)^{s}(x^{-1}y)^{2(m-s)}
\int_0^1
r^{s+u-\frac12}
(1-r)^m
\,dr
\\
&=\dfrac12\sum_{s=0}^m
\binom{m}{s}
(-1)^{s}
x^{2m+2u+2s+1}
y^{2m-2s}
B\Bigl(s+u+\dfrac12,m+1\Bigr)
\\
&
=\dfrac1{2(m+1)}
\sum_{s=0}^m
\dfrac{
(-1)^{s}\binom{m}{s}
}{\binom{m+u+s+\frac12}{m+1}}
x^{2m+2u+2s+1}y^{2m-2s}.
\hfill\tag{1}
\end{align*}
This equation implies $h_{1,m,u}=2(m+1)\widetilde{f_u^m}$
by comparing the construction in \cite{FWY} to ours.  Thus 
Theorem \ref{Main} 
settles the conjecture in \S5 of  \cite{FWY} 
affirmatively.
\begin{cor}
The conjecture of Feigin, Wang and Yoshinaga in \cite{FWY} is true.  
Namely, their candidate actually gives a basis 
of $D(c\!\,\cat(B_2,m))$.
\end{cor}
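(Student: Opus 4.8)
The plan is to deduce the corollary directly from the $\ell=2$ case of Theorem \ref{Main}, after identifying our derivations $\delta_{2,m,u}$ with the candidate derivations of Feigin, Wang and Yoshinaga up to a nonzero scalar. By Theorem \ref{Main} with $\ell=2$, the set $\{\delta_E\}\cup\{\delta_{2,m,u}\mid u=0,1\}$ is a basis of $D(c\!\,\cat(B_2,m))$. Since rescaling individual basis elements by nonzero constants preserves the property of being a basis, it suffices to check that the \cite{FWY} candidate is obtained from this set by exactly such a rescaling.

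First I would make the correspondence between the two constructions precise. The components of $\delta_{2,m,u}$ are governed by the single function $h_{1,m,u}$, through $F_{2,m,u,1}(x_1,x_2)=h_{1,m,u}(x_1;x_2)$ and $F_{2,m,u,2}(x_1,x_2)=h_{1,m,u}(x_2;x_1)$, while the \cite{FWY} candidate is assembled from the discretization $\widetilde{f_u^m}$ of the integral $\int_0^x t^{2u}(t^2-x^2)^m(t^2-y^2)^m\,dt$, inserted into the two coordinate slots in the same symmetric manner. The evaluation (1) of this integral displays it as an explicit polynomial, and under the discretization rule that replaces $x^{2(m+u+s)+1}$ by $P(x,m+u+s)$ and $y^{2(m-s)}$ by $P(y,u+m)/P(y,u+s)$, each monomial of (1) maps term-by-term onto the corresponding summand of $h_{1,m,u}$. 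Matching the coefficients, including the overall factor $\tfrac1{2(m+1)}$ in (1), yields the identity $h_{1,m,u}=2(m+1)\widetilde{f_u^m}$ already recorded above.

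Next I would transport this identity through the homogenization step. Since homogenization in $z$ is $\bC$-linear, the slotwise relation $F_{2,m,u,i}=2(m+1)\widetilde{f_u^m}$ passes to the homogenized components $\widetilde{F}_{2,m,u,i}$, giving $\delta_{2,m,u}=2(m+1)\,\eta_u$, where $\eta_u$ denotes the corresponding homogenized candidate derivation of \cite{FWY}. Because $2(m+1)\neq 0$, the set $\{\delta_E\}\cup\{\eta_u\mid u=0,1\}$ differs from the basis $\{\delta_E\}\cup\{\delta_{2,m,u}\mid u=0,1\}$ only by rescaling, hence is itself a basis of $D(c\!\,\cat(B_2,m))$, which is precisely the conjectured statement.

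The entire substance of the argument is supplied by Theorem \ref{Main}; the corollary is a clean specialization together with a normalization check, so I do not expect a genuine obstacle. The only point requiring care is the slot-by-slot bookkeeping: confirming that \cite{FWY} place $\widetilde{f_u^m}(x_1,x_2)$ and $\widetilde{f_u^m}(x_2,x_1)$ in the $\partial_{x_1}$ and $\partial_{x_2}$ components exactly as $h_{1,m,u}$ enters $F_{2,m,u,1}$ and $F_{2,m,u,2}$, that the first and second arguments of $h_{1,m,u}$ correspond respectively to the integration-variable role of $x$ and the parameter role of $y$ in the integral, and that the exponent-to-$P$ dictionary used in (1) coincides with theirs. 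I expect reconciling this matching and the factor $2(m+1)$ to be the sole item needing verification.
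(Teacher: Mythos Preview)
Your proposal is correct and follows essentially the same route as the paper: specialize Theorem~\ref{Main} to $\ell=2$, invoke the identity $h_{1,m,u}=2(m+1)\widetilde{f_u^m}$ coming from the integral computation~(1), and conclude that the \cite{FWY} candidate differs from our basis only by nonzero scalars. The paper records this reasoning in the paragraph preceding the corollary rather than as a separate proof, but the content is the same.
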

\begin{subsection}*{Acknowledgments}
We are grateful to T. Abe and M. Yoshinaga for their interest to 
our work.  We thank T. Nozawa for discussing always and 
reading the manuscript carefully.

\end{subsection}
 
\end{section}
\begin{section}{Preliminaries}
We investigate the properties of 
$h_{\ell,m,u}(x;\bm{y})$ which will be used in the 
proof of the main theorem.

\begin{subsection}{The case $\ell=1$}
We give two formulae for $h_{1,m,u}$.  Recall that 
$$
h_{1,m, u}( x; y)=
\sum_{s=0}^m
\dfrac{(-1)^{s}\binom{m}{s}}{\binom{ m+u+s + \frac12}{m + 1}}
P(x,m+u+s)\dfrac{P(y,u + m)}{P(y,u + s)}.
$$

\begin{prop}\label{prop1}
For $m\in\bZ_{\geq0}$ and $u\in\bZ_{\geq1}$, the following 
recurrence formula for $m$ holds.
$$
\dfrac{2u - 1}{2m + 4}
h_{1,m + 1, u - 1}( x; y)
=\left\{
x^2 + y^2 - (m + u + 1)^2 - u^2\right\}
h_{1,m, u}( x; y) - 2h_{1,m, u + 1}( x; y)
.$$
\end{prop}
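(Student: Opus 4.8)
\emph{Strategy.} The plan is to prove the identity directly as an equality of polynomials in $x$ and $y$, by expanding all three terms with a common factor structure and matching coefficients. Everything rests on two elementary ``discrete multiplication'' rules, both immediate from the factorization $P(a,b)=a\prod_{j=1}^{b}(a^2-j^2)$ (so that $P(a,b)=(a^2-b^2)P(a,b-1)$):
$$
x^2P(x,N)=P(x,N+1)+(N+1)^2P(x,N),\qquad
y^2\frac{P(y,u+m)}{P(y,u+s)}=\frac{P(y,u+m)}{P(y,u+s-1)}+(u+s)^2\frac{P(y,u+m)}{P(y,u+s)}.
$$
The shape of the recurrence is predicted by the continuous model of \S1: integrating $\frac{d}{dt}\bigl\{t^{2u-1}(t^2-x^2)^{m+1}(t^2-y^2)^{m+1}\bigr\}$ over $[0,x]$, where both endpoint contributions vanish for $u\ge1$, yields $(2u-1)f_{u-1}^{m+1}=2(m+1)(x^2+y^2)f_u^m-4(m+1)f_{u+1}^m$ for the integrals $f_u^m$. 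The extra summand $-(m+u+1)^2-u^2$ in the discrete statement is exactly the defect arising when $x^2\cdot$ and $y^2\cdot$ are replaced by the two rules above; its role in the computation is to make the comparison of lowest-order $y$-terms reduce to the clean identity below.

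\emph{Reduction to scalar identities.} Write $h_{1,m,u}(x;y)=\sum_{s=0}^{m}c_s^{(m,u)}P(x,m+u+s)\frac{P(y,u+m)}{P(y,u+s)}$ with $c_s^{(m,u)}=(-1)^s\binom{m}{s}\big/\binom{m+u+s+\frac12}{m+1}$. Applying the two rules to $x^2h_{1,m,u}$ and $y^2h_{1,m,u}$, reindexing so that every term carries a common factor $P(x,m+u+s)$, and rewriting the once-shifted $y$-window of $h_{1,m,u+1}$ (which, after matching $x$-indices, equals $(y^2-(u+m+1)^2)\frac{P(y,u+m)}{P(y,u+s)}$, hence by the second rule lies in the same two-dimensional space), I bring both sides to the form $\sum_s P(x,m+u+s)\,g_s(y)$, where each $g_s(y)$ lies in the span of $\frac{P(y,u+m)}{P(y,u+s)}$ and $\frac{P(y,u+m)}{P(y,u+s-1)}$. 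The polynomials $P(x,m+u+s)$ have pairwise distinct $x$-degrees and leading coefficient $1$, so they are linearly independent over $\bC[y]$; and the two displayed $y$-factors have different degrees, hence are independent. Matching therefore reduces to two scalar identities for each $s$.

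\emph{The scalar identities.} The coefficient of the lower-degree factor $\frac{P(y,u+m)}{P(y,u+s)}$ collapses, using $(m+u+s+1)^2+(u+s)^2-(m+u+1)^2-u^2=2s(m+2u+s+1)$ together with $(u+s)^2-(u+m+1)^2=-(m+1-s)(m+2u+s+1)$, to
$$
s\,c_s^{(m,u)}=-(m+1-s)\,c_{s-1}^{(m,u+1)},
$$
while the coefficient of the higher-degree factor reduces, after evaluating the ratios $\binom{m+u+s+\frac12}{m+1}\big/\binom{m+u+s-\frac12}{m+1}$ and $\binom{m+u+s+\frac12}{m+2}\big/\binom{m+u+s+\frac12}{m+1}$ and using $\binom{m+1}{s}=\binom{m}{s}+\binom{m}{s-1}$, to a relation of the same type. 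Both follow from the single elementary identity $s\binom{m}{s}=(m+1-s)\binom{m}{s-1}$; here it is useful that the triples $(m,u,s)$ and $(m,u+1,s-1)$ produce the same denominator $\binom{m+u+s+\frac12}{m+1}$, so that only binomial factors survive the comparison. The boundary indices $s=0$ and $s=m+1$ are handled separately, where some summands are empty products and the relevant identities degenerate to equalities holding because $\binom{m}{-1}=\binom{m}{m+1}=0$.

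\emph{Main obstacle.} I expect the genuine difficulty to be organizational rather than conceptual: one must keep three different $y$-windows — $\frac{P(y,u+m)}{P(y,u+s)}$, its neighbour $\frac{P(y,u+m)}{P(y,u+s-1)}$, and the shifted window from $h_{1,m,u+1}$ — correctly aligned through the reindexing, and confirm that after alignment every term genuinely lands in the single two-dimensional space, so that coefficient comparison is legitimate. Once this bookkeeping is in place the algebra is routine, since both scalar identities funnel into the one binomial relation above.
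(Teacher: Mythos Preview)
Your approach is correct and essentially complete; the two scalar identities do indeed both collapse to $s\binom{m}{s}=(m{+}1{-}s)\binom{m}{s-1}$, and the boundary cases $s=0,\,m{+}1$ are covered by $\binom{m}{-1}=\binom{m}{m+1}=0$ as you say.

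The paper takes a different route. It introduces formal variables $A_\ast,B_\ast$, an operator $L_{m,u}=(-1)^uG_{m,u}(\partial_{A_1}\partial_{A_2},\partial_{B_1}\partial_{B_2})$ built from the generating polynomial $G_{m,u}$, and a monomial $M_r$, so that $h_{1,m,u}=x\,\overline{L_{m,u}M_{m+u}}$ after evaluation at $1$. Crucially, the continuous model is not just motivation: the integration-by-parts identity for $\int_0^x t^{2u}(t^2-x^2)^m(t^2-y^2)^m\,dt$ is used, via the closed form $(1)$, to \emph{prove} the operator recurrence $\frac{2u-1}{2m+4}L_{m+1,u-1}=-2L_{m,u+1}-(\partial_{A_1}\partial_{A_2}+\partial_{B_1}\partial_{B_2})L_{m,u}$. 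What remains is to account for the mismatch between $M_{m+u}$ and $M_{m+u+1}$, which the paper does by computing $\overline{L_{m,u+1}(M_{m+u+1}-M_{m+u})}$ and $-\overline{(\partial_{A_1}\partial_{A_2}+\partial_{B_1}\partial_{B_2})L_{m,u}M_{m+u}}$ directly; the extra $-(m+u+1)^2-u^2$ falls out of that correction. Your proof, by contrast, bypasses the integral and the operator calculus entirely and matches coefficients head-on in the basis $P(x,m{+}u{+}s)\cdot\bigl\{\frac{P(y,u+m)}{P(y,u+s)},\frac{P(y,u+m)}{P(y,u+s-1)}\bigr\}$. The paper's organization explains \emph{why} the recurrence has exactly this shape and isolates the discrete correction cleanly; your argument is more elementary and self-contained, trading that structural insight for a single binomial identity checked twice.
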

\begin{proof}
We denote that 
$M_{r}=A_1^{x-1}A_2^{-x-1}B_1^{y+r}B_2^{-y+r}$ 
with formal variables $A_\ast$, $B_\ast$, and that 
$$G_{m,u}(X,Y)=
\sum_{s=0}^m
\dfrac{(-1)^{s}\binom{m}{s}}{\binom{ m+u+s + \frac12}{m + 1}}
X^{m+u+s}Y^{m-s},
\quad
L_{m,u}=(-1)^u
G_{m,u}(\partial_{A_1}\partial_{A_2},\partial_{B_1}\partial_{B_2}).
$$
We denote by $\overline{\phantom{A}}$ 
the evaluation at $A_\ast=B_\ast=1$.
Then it is easy to see that 
$$h_{1,m,u}=x\,\overline{L_{m,u}M_{m+u}}.$$

On the other hand, by $(1)$, we have 
$$\int_0^xt^{2u}(t^2-x^2)^m(t^2-y^2)^m\,dt=
\dfrac{x}{2(m+1)}G_{m,u}(x^2,y^2).$$
Using integration by parts, for $m\in\bZ_{\geq0}$, we have 
$$
\int_0^xt^{2u-2}(t^2-x^2)^{m+1}(t^2-y^2)^{m+1}\,dt
=\dfrac{2(m+1)}{2u-1}\int_0^xt^{2u}
(t^2-x^2)^{m}(t^2-y^2)^{m}
(-2t^2+x^2+y^2)\,dt.
$$
Thus we obtain 
$$
\dfrac{2u-1}{2m+4}G_{m+1,u-1}(X,Y)=
-2G_{m,u+1}(X,Y)+(X+Y)G_{m,u}
$$
and hence 
$$
\dfrac{2u-1}{2m+4}L_{m+1,u-1}=
-2L_{m,u+1}-
(\partial_{A_1}\partial_{A_2}+\partial_{B_1}\partial_{B_2})
L_{m,u}
.\eqno{(2)}$$

We calculate that 
\begin{align*} 
\lefteqn{
\overline{L_{m, u + 1}(M_{m+u+1}-M_{m+u})}
}
\\
&
=\sum_{s=0}^m
\dfrac{(-1)^{s}\binom{m}{s}}{\binom{ m+u+s + \frac32}{m + 1}}
P(x,m+u+s+1)\left\{
\dfrac{P(y,u + m+1)}{P(y,u + s+1)}
-
\dfrac{P(y,u + m)}{P(y,u + s)}
\right\}
\\
&
=\sum_{s=1}^{m+1}
\dfrac{(-1)^{s-1}\binom{m}{s-1}}{\binom{ m+u+s + \frac12}{m + 1}}
P(x,m+u+s)\left\{
\dfrac{P(y,u + m+1)}{P(y,u + s)}
-
\dfrac{P(y,u + m)}{P(y,u + s-1)}
\right\}
\\ &
=\sum_{s=0}^{m}
\dfrac{(-1)^{s}\binom{m}{s}}{\binom{ m+u+s + \frac12}{m + 1}}
P(x,m+u+s)\dfrac{P(y,u + m)}{P(y,u + s)}
(2u+s+m+1)s,
\tag{3}
\end{align*}
and that 
\begin{align*} 
\lefteqn{
-\overline{
(\partial_{A_1}\partial_{A_2}+\partial_{B_1}\partial_{B_2})
L_{m,u}M_{m+u}
}
}
\\ &
=\sum_{s=0}^m
\dfrac{(-1)^{s}\binom{m}{s}}{\binom{ m+u+s + \frac12}{m + 1}}P(y,u+m)
\left\{\dfrac{P(x,m+u+s+1)}{P(y,u + s)}
+\dfrac{P(x,m+u+s)}{P(y,u + s-1)}\right\}
\\ &
=\sum_{s=0}^m
\dfrac{(-1)^{s}\binom{m}{s}}{\binom{ m+u+s + \frac12}{m + 1}}
P(x,m+u+s)\dfrac{P(y,u+m)}{P(y,u + s)}\left\{
x^2+y^2-(m+u+s+1)^2-(u+s)^2
\right\}.
\tag{4}
\end{align*}
Thus we conclude that 
\begin{align*}
&
\dfrac1x\left(
\dfrac{2u - 1}{2m + 4}
h_{1,m + 1, u - 1}
+ 2h_{1,m, u + 1}
\right)
=
\dfrac{2u - 1}{2m + 4}
\overline{L_{m+1,u-1}M_{m+u}}
+ 2
\overline{L_{m,u+1}M_{m+u+1}}
\\ &
\overset{(2)}{=}
 \overline{\{
-2L_{m,u+1}-
(\partial_{A_1}\partial_{A_2}+\partial_{B_1}\partial_{B_2})
L_{m,u}\}M_{m+u}}
+ 2
\overline{L_{m,u+1}M_{m+u+1}}
\\ &
\overset{(3)(4)}{=}
\sum_{s=0}^{m}
\dfrac{(-1)^{s}\binom{m}{s}}{\binom{ m+u+s + \frac12}{m + 1}}
P(x,m+u+s)\dfrac{P(y,u + m)}{P(y,u + s)}
\cdot
\\ &
\qquad
\left\{
2(2u+s+m+1)s
+x^2+y^2-(m+u+s+1)^2-(u+s)^2
\right\}
\\ &
=\sum_{s=0}^{m}
\dfrac{(-1)^{s}\binom{m}s}{\binom{ m+u+s + \frac12}{m + 1}}
P(x,m+u+s)\dfrac{P(y,u + m)}{P(y,u + s)}
\left\{
x^2+y^2
-(m+u+1)^2
-u^2
\right\}
\\ &
=\dfrac1x\left\{
x^2 + y^2 - (m + u + 1)^2 - u^2\right\}
h_{1,m, u}.
\qedhere
\end{align*}
\end{proof}

\begin{lem}\label{lem1}
The following equation holds.
$$
\sum_{s=0}^m
\dfrac{(-1)^s\binom{m}{s}}
{\binom{m+s+u+\frac12}{m+1}}
\binom{
k+m+u+s+\frac12
}{2m}
=\dfrac{(m+1)(k)_{m}^\downarrow
(u+k+\frac32)_{m}^\uparrow}
{(u+\frac12)_{2m+1}^\uparrow}.
$$
\end{lem}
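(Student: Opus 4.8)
The plan is to recognize the left-hand side as a terminating, balanced (Saalsch\"utzian) ${}_3F_2$ evaluated at $1$ and then to invoke the Pfaff--Saalsch\"utz summation. To set up, I would put $\alpha=u+\tfrac12$ and rewrite every binomial and factorial as a ratio of Gamma functions via $(a)^\downarrow_c=\Gamma(a+1)/\Gamma(a-c+1)$, $(b)^\uparrow_c=\Gamma(b+c)/\Gamma(b)$, and $\binom{x}{n}=(x)^\downarrow_n/n!$. Denoting the $s$-th summand by $T_s$, this exhibits
\[
T_s=(-1)^s\frac{m!}{s!\,(m-s)!}\cdot\frac{(m+1)!\,\Gamma(s+\alpha)}{\Gamma(m+s+\alpha+1)}\cdot\frac{\Gamma(k+m+s+\alpha+1)}{(2m)!\,\Gamma(k+s+\alpha-m+1)}
\]
as a hypergeometric term in $s$.

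Next I would compute the term ratio, which a direct calculation gives as
\[
\frac{T_{s+1}}{T_s}=\frac{(s-m)(s+\alpha)(s+k+m+\alpha+1)}{(s+1)(s+m+\alpha+1)(s+k+\alpha-m+1)}.
\]
Factoring out $T_0$, the sum becomes $\mathrm{LHS}=T_0\cdot{}_3F_2(-m,\alpha,k+m+\alpha+1;\,m+\alpha+1,k+\alpha-m+1;1)$. The series terminates since the upper parameter $-m$ is a non-positive integer. The crucial point is that it is balanced: the two lower parameters sum to $(m+\alpha+1)+(k+\alpha-m+1)=2\alpha+k+2$, which equals one plus the sum of the three upper parameters $(-m)+\alpha+(k+m+\alpha+1)+1=2\alpha+k+2$. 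This is exactly the Saalsch\"utz condition, so the Pfaff--Saalsch\"utz theorem applies with $a=\alpha$, $b=k+m+\alpha+1$, $c=m+\alpha+1$, $n=m$, giving
\[
{}_3F_2=\frac{(m+1)_m\,(-k)_m}{(m+\alpha+1)_m\,(-k-\alpha)_m},
\]
where $(x)_n=(x)^\uparrow_n$ is the rising factorial.

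Finally I would convert these four Pochhammer symbols back to the paper's notation, using $(m+1)_m=(2m)!/m!$, $(-k)_m=(-1)^m(k)^\downarrow_m$, $(m+\alpha+1)_m=\Gamma(2m+\alpha+1)/\Gamma(m+\alpha+1)$, and $(-k-\alpha)_m=(-1)^m(k+\alpha)^\downarrow_m$, and then multiply by $T_0$. The signs $(-1)^m$, the factor $(2m)!$, and the Gamma factor $\Gamma(m+\alpha+1)$ all cancel; using $\Gamma(k+\alpha-m+1)\,(k+\alpha)^\downarrow_m=\Gamma(k+\alpha+1)$ collapses the $k$-dependent Gamma quotient to $(k+\alpha+1)^\uparrow_m$, while $\Gamma(\alpha)/\Gamma(2m+\alpha+1)=1/(\alpha)^\uparrow_{2m+1}$. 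What remains is $(m+1)(k)^\downarrow_m(k+\alpha+1)^\uparrow_m/(\alpha)^\uparrow_{2m+1}$, and restoring $\alpha=u+\tfrac12$ turns this into $(m+1)(k)^\downarrow_m(u+k+\tfrac32)^\uparrow_m/(u+\tfrac12)^\uparrow_{2m+1}$, the right-hand side. Because $\alpha$ is a half-integer, no lower parameter is a non-positive integer, so no term is singular and the evaluation is legitimate; moreover both sides are polynomials in $k$ of degree $2m$, which is a useful sanity check. I do not expect a genuine obstacle here: the entire content is the recognition of the balance condition, after which Pfaff--Saalsch\"utz does all the work, and the only real labor is carefully tracking the half-integer shifts $\alpha=u+\tfrac12$ when translating between the $\binom{\cdot}{\cdot}$ form and the standard Pochhammer form.
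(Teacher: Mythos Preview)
Your proposal is correct and follows essentially the same route as the paper: both rewrite the sum as the terminating balanced ${}_3F_2\bigl(-m,\,u+\tfrac12,\,k+m+u+\tfrac32;\,m+u+\tfrac32,\,k-m+u+\tfrac32;1\bigr)$, apply Pfaff--Saalsch\"utz, and then simplify the resulting Pochhammer symbols back to the stated closed form. The only differences are cosmetic---you work with $\Gamma$-functions and compute the term ratio explicitly (and verify the balance condition), whereas the paper manipulates the rising/falling factorials directly before naming the ${}_3F_2$.
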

\begin{proof}
We denote the left hand side of the equation in the assertion as $L$. 
Note that 

\begin{align*}
L&=
\sum_{s=0}^m
\dfrac{(-1)^s\binom{m}{s}}
{\binom{m+s+u+\frac12}{m+1}}
\binom{
k+m+u+s+\frac12
}{2m}
=
\sum_{s=0}^m
\dfrac{(-1)^s(m)_s^\downarrow
(m+1)!
(k+m+u+s+\frac12)_{2m}^\downarrow
}{s!
(m+s+u+\frac12)_{m+1}^\downarrow
(2m)!
}
\\
&
=
\dfrac{(k+m+u+\frac12)_{2m}^\downarrow(m+1)!
}{(u+\frac12)_{m+1}^\uparrow
(2m)!}
\sum_{s=0}^m \dfrac{(-m)_s^\uparrow
(k+m+u+\frac32)_{s}^\uparrow (u+\frac12)_{s}^\uparrow
}{s! (k-m+u+\frac32)_{s}^\uparrow (m+u+\frac32)_{s}^\uparrow}.
\end{align*}
Now, by Pfaff-Saalsch\"utz theorem, we have 
\begin{align*}
&\sum_{s=0}^m \dfrac{(-m)_s^\uparrow
(k+m+u+\frac32)_{s}^\uparrow (u+\frac12)_{s}^\uparrow
}{s! (k-m+u+\frac32)_{s}^\uparrow (m+u+\frac32)_{s}^\uparrow}
=
\left.{}_{3}F_{2}\genfrac(.{0pt}{}%
{\,{-m}\,,\,{u+\frac12}\,,\,{k+m+u+\frac32}\,}
{\,{m+u+\frac32}\,,\,{k-m+u+\frac32}\,};1\right)
\\ &
=\dfrac{
\left(
(m+u+\frac32)-(u+\frac12)
\right)_{m}
\left(
(m+u+\frac32)-(k+m+u+\frac32)
\right)_{m}
}{
(m+u+\frac32)_{m}
\left(
(m+u+\frac32)-(u+\frac12)-(k+m+u+\frac32)
\right)_{m}
}
\\ &
=
\dfrac{
(m+1)_{m}^\uparrow
(-k)_{m}^\uparrow
}{
(m+u+\frac32)_{m}^\uparrow
(-k-u-\frac12)_{m}^\uparrow
}
=
\dfrac{
(m+1)_{m}^\uparrow
(k)_{m}^\downarrow
}{
(m+u+\frac32)_{m}^\uparrow
(k+u+\frac12)_{m}^\downarrow
}.
\end{align*}
Therefore we conclude that 
\begin{align*}
L&
=
\dfrac{(k+m+u+\frac12)_{2m}^\downarrow(m+1)!
\cdot
(m+1)_{m}^\uparrow(k)_{m}^\downarrow
}{
(u+\frac12)_{m+1}^\uparrow
(2m)!
\cdot
(m+u+\frac32)_{m}^\uparrow
(k+u+\frac12)_{m}^\downarrow
}
=
\dfrac{(m+1)
(k)_{m}^\downarrow
(u+k+\frac32)_{m}^\uparrow
}{
(u+\frac12)_{2m+1}^\uparrow
}.
\qedhere
\end{align*}
\end{proof}

\begin{prop}\label{prop2}
There exists a constant $A_{m,u}$ satisfying the following relations.
\begin{align*}
h_{1,m,u}(x;y)
&\equiv
\phantom{-}
A_{m,u}
(x  -m - u)_{3m + 2u + 1}^\uparrow
(x +  1/2)_{m}^\uparrow
\\
& \equiv
-
A_{m,u}
(y  -m-u)_{3m + 2u + 1}^\uparrow
(y  +1/2)_{m}^\uparrow
\mod{(x+y+m)}.
\end{align*}
\end{prop}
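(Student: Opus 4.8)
The plan is to prove the first congruence; the second then follows formally. Indeed, substituting $y=-x-m$ into the two proposed right-hand sides and reindexing each rising factorial (replacing the running index $i$ by $3m+2u-i$, resp.\ by $m-1-i$) gives $(y-m-u)^\uparrow_{3m+2u+1}=(-1)^{3m+2u+1}(x-m-u)^\uparrow_{3m+2u+1}$ and $(y+\frac12)^\uparrow_m=(-1)^m(x+\frac12)^\uparrow_m$ modulo $(x+y+m)$, so the total sign is $(-1)^{4m+2u+1}=-1$ and the two assertions coincide, with the same constant $A_{m,u}$.

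To prove the first congruence I reduce $h_{1,m,u}(x;y)$ modulo $(x+y+m)$, i.e.\ I put $y=-x-m$. The only point requiring care is the ratio $P(-x-m,u+m)/P(-x-m,u+s)$. Writing each $P$ as a product of consecutive integers and extracting the sign from every factor yields $P(-x-m,u+m)=-(x-u)^\uparrow_{2m+2u+1}$ and $P(-x-m,u+s)=-(x+m-u-s)^\uparrow_{2u+2s+1}$, whence the ratio equals $(x-u)^\uparrow_{m-s}\,(x+m+u+s+1)^\uparrow_{m-s}$. Thus the $s$-th summand of $h_{1,m,u}(x;-x-m)$ is $c_s\,P(x,m+u+s)\,(x-u)^\uparrow_{m-s}\,(x+m+u+s+1)^\uparrow_{m-s}$, where $c_s=(-1)^s\binom{m}{s}/\binom{m+u+s+\frac12}{m+1}$.

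Next I extract the common factor $(x-m-u)^\uparrow_{3m+2u+1}$. Keeping track of the integer shifts appearing in $P(x,m+u+s)=(x-m-u-s)^\uparrow_{2m+2u+2s+1}$ and in the two extra rising factorials, one checks that every summand contains all the factors $(x+j)$ with $-m-u\le j\le 2m+u$; dividing them out leaves exactly $(x-m-u-s)^\uparrow_s\,(x-u)^\uparrow_{m-s}$. Hence $h_{1,m,u}(x;-x-m)=(x-m-u)^\uparrow_{3m+2u+1}\,Q(x)$ with $Q(x)=\sum_{s=0}^m c_s\,(x-m-u-s)^\uparrow_s\,(x-u)^\uparrow_{m-s}$, a polynomial of degree $m$; it remains only to identify $Q$.

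Finally I recast $Q$ as a hypergeometric sum. Rewriting $(x-m-u-s)^\uparrow_s$, $(x-u)^\uparrow_{m-s}$ and $c_s$ in terms of Pochhammer symbols and factoring out the $s$-independent part turns $Q(x)$ into $\frac{(m+1)!}{(u+\frac12)^\uparrow_{m+1}}\,(x-u)^\uparrow_m$ times a terminating ${}_3F_2$ at $1$ with upper parameters $-m,\,u+\frac12,\,u+m+1-x$ and lower parameters $u+m+\frac32,\,u-m+1-x$. This ${}_3F_2$ is Saalsch\"utzian, so the Pfaff--Saalsch\"utz theorem (the same tool as in Lemma \ref{lem1}) evaluates it in closed form; in the resulting product the factor $(x-u)^\uparrow_m$ cancels, leaving $Q(x)=A_{m,u}(x+\frac12)^\uparrow_m$ with $A_{m,u}=\frac{(m+1)(2m)!}{(u+\frac12)^\uparrow_{2m+1}}$. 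The main obstacle is precisely this last identification: recognizing that the reduced sum is balanced and that the spurious prefactor $(x-u)^\uparrow_m$ is cancelled by the $(c-a-b)$ Pochhammer symbol in the Pfaff--Saalsch\"utz evaluation, so that exactly the predicted factor $(x+\frac12)^\uparrow_m$ survives.
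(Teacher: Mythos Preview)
Your proof is correct and follows essentially the same route as the paper: reduce modulo $(x+y+m)$, factor $(x-m-u)^\uparrow_{3m+2u+1}$ out of every summand, and identify the remaining degree-$m$ polynomial via the Pfaff--Saalsch\"utz theorem, then obtain the $y$-form from the $x$-form by the sign computation $(-1)^{4m+2u+1}=-1$. The only difference is organizational: the paper packages the hypergeometric evaluation into Lemma~\ref{lem1} and then checks vanishing at the $m$ points $x=-k-\tfrac12$ ($0\le k<m$) together with a degree count, whereas you apply Pfaff--Saalsch\"utz directly to $Q(x)$ for general $x$, which is slightly more direct and also produces the explicit constant $A_{m,u}=(m+1)(2m)!\big/(u+\tfrac12)^\uparrow_{2m+1}$.
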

\begin{proof}
For $0\leq s\leq m$, we see that 
$P(x,m+u+s)\dfrac{P(y,u+m)}{P(y,u+s)}$ is divisible by 
\begin{align*}
&
P(x,m+u+s)
(y-u-m)_{m-s}^\uparrow
\equiv
P(x,m+u+s)
(-x-u-2m)_{m-s}^\uparrow
\mod{(x+y+m)}
\\
&\quad 
=
(-1)^{m-s}
P(x,m+u+s)
(x+2m+u)_{m-s}^\downarrow
=(-1)^{m-s}(x+2m+u)_{3m+2u+s+1}^\downarrow
\\
&\quad
=(-1)^{m-s}
(x-m-u)_{3m+2u+1}^\uparrow
(x-m-u-1)_{s}^\downarrow.
\end{align*}
It follows that 
$$h_{1,m,u}(x;y)\in((x-m-u)_{3m+2u+1}^\uparrow)+(x+y+m).
\eqno{(5)}$$

By applying Lemma \ref{lem1}, we obtain 
\begin{align*}
\lefteqn{
P\Bigl(-m+k+\frac12,u + m\Bigr)^{-1}
h_{1,m,u}\Bigl(-k-\frac12;-m+k+\frac12\Bigr)}
\\
&=
\sum_{s=0}^m
\dfrac{(-1)^{s}\binom{m}{s}}{\binom{ m+u+s + \frac12}{m + 1}}
\dfrac{P(-k-\frac12,m+u+s)
}{P(-m+k+\frac12,u + s)}
=
\sum_{s=0}^m
\dfrac{(-1)^{s+1}\binom{m}{s}}{\binom{ m+u+s + \frac12}{m + 1}}
\dfrac{P(k+\frac12,m+u+s)
}{P(-m+k+\frac12,u + s)}
\\
&=
\sum_{s=0}^m
\dfrac{(-1)^{s+1}\binom{m}{s}}{\binom{ m+u+s + \frac12}{m + 1}}
\dfrac{(k+m+u+s+\frac12)_{2m+2u+2s+1}^\downarrow
}{(k-m+u+s+\frac12)_{2u + 2s+1}^\downarrow}
=
\sum_{s=0}^m
\dfrac{(-1)^{s+1}\binom{m}{s}
(k+m+u+s+\frac12)_{2m}^\downarrow
}{\binom{ m+u+s + \frac12}{m + 1}}
\\ 
&=-\dfrac{(m+1)(2m)!(k)_{m}^\downarrow
(u+k+\frac32)_{m}^\uparrow}
{(u+\frac12)_{2m+1}^\uparrow}.
\end{align*}
In particular, for integers $0\leq k<m$, we have 
$$h_{1,m,u}\Bigl(-k-\frac12;-m+k+\frac12\Bigr)=0.$$
It implies that 
$$h_{1,m,u}(x;y)\in((x+1/2)_{m}^\uparrow)+(x+y+m).
\eqno{(6)}$$
By (5) and (6), we conclude that 
$$h_{1,m,u}(x;y)
\in((x+1/2)_{m}^\uparrow(x-m-u)_{3m+2u+1}^\uparrow)+(x+y+m).$$
Since 
$\deg h_{1,m,u}(x;y)\leq 4m+2u+1$, there exists a 
constant $A_{m,u}$ such that 
$$h_{1,m,u}(x;y)\equiv
A_{m,u}
(x-m-u)_{3m+2u+1}^\uparrow
(x+1/2)_{m}^\uparrow
\mod{(x+y+m)}.
\eqno{(7)}$$
Recall that we have 
\begin{align*}
&
(x-m-u)_{3m+2u+1}^\uparrow
(x+1/2)_{m}^\uparrow
\equiv
(-y-2m-u)_{3m+2u+1}^\uparrow
(-y-m+1/2)_{m}^\uparrow
\mod{(x+y+m)}
\\
&\quad
=
(-y+m+u)_{3m+2u+1}^\downarrow
(-y-1/2)_{m}^\downarrow
=-
(y-m-u)_{3m+2u+1}^\uparrow
(y+1/2)_{m}^\uparrow.
\end{align*}
Thus (7) implies 
$$h_{1,m,u}(x;y)\equiv
-A_{m,u}
(y-m-u)_{3m+2u+1}^\uparrow
(y+1/2)_{m}^\uparrow
\mod{(x+y+m)}.
\eqno{\qed}
$$
\renewcommand{\qed}{}
\end{proof}
\end{subsection}
\begin{subsection}{The case $\ell>1$.}
Next we examine the properties of 
$h_{\ell,m,u}(x;\bm{y})$ with $\ell>1$. 
For this purpose, we define an auxiliary function 
$\Phi_{m,k,v}$.  For $m,k,v\in\bZ_{\geq0}$, we define 
$$
\Phi_{m,k,v}(y,z)=\Phi_{m,k,v}=
\sum_{t=0}^m
\binom{m}{t}\binom{m}{k-t}
\dfrac{P(y,v+m)P(z,v+m+t)}{P(y,v+t)P(z,v+k)}.
$$
\begin{lem}\label{Phi}

\ \medskip

\item[(1)]
The following recurrence formula for $k$ holds.
$$\Phi_{m, k, v + 1} - \Phi_{m, k, v}
+ (k + 1)(2v + k + m + 2)\Phi_{m, k + 1, v}=0.$$
\item[(2)]
$\Phi_{m,k,v}$ is symmetric in $y$ and $z$, namely, 
$\Phi_{m,k,v}(y,z)=\Phi_{m,k,v}(z,y)$.
\end{lem}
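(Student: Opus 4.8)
The plan is to prove part (1) first, by a direct manipulation of the defining sum, and then to deduce part (2) from part (1) by an induction on $k$. Before starting, I would record the elementary factorizations
$$\frac{P(y,v+m)}{P(y,v+t)}=\prod_{j=t+1}^{m}(y^2-(v+j)^2),\qquad \frac{P(z,v+m+t)}{P(z,v+k)}=\prod_{j=k+1}^{m+t}(z^2-(v+j)^2),$$
valid for $0\le t\le m$ and $m+t\ge k$, which is the only range in which $\binom{m}{t}\binom{m}{k-t}\neq0$. Writing $\alpha_j=y^2-(v+j)^2$ and $\beta_j=z^2-(v+j)^2$, this puts $\Phi_{m,k,v}$ into the purely polynomial form $\sum_t\binom{m}{t}\binom{m}{k-t}(\prod_{j=t+1}^m\alpha_j)(\prod_{j=k+1}^{m+t}\beta_j)$, in which incrementing $v$ is exactly the index shift $\alpha_j\mapsto\alpha_{j+1}$, $\beta_j\mapsto\beta_{j+1}$. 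The useful structural facts are that $\alpha_i-\alpha_j=\beta_i-\beta_j=(v+j)^2-(v+i)^2$, so $y^2$ and $z^2$ cancel in all such differences.

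For part (1), I would bring all three terms $\Phi_{m,k,v+1}$, $\Phi_{m,k,v}$, and $\Phi_{m,k+1,v}$ to the common level $v$ using this shift, so that $\Phi_{m,k,v+1}-\Phi_{m,k,v}$ becomes a single sum over $t$ whose summand is the difference of two products of the $\alpha_j,\beta_j$. After factoring out of each $t$-summand the largest common subproduct $(\prod_{j=t+2}^m\alpha_j)(\prod_{j=k+2}^{m+t}\beta_j)$, what survives is a bracket built from the four boundary factors $\alpha_{m+1}$, $\alpha_{t+1}$, $\beta_{k+1}$, $\beta_{m+t+1}$; the same common subproduct appears in $\Phi_{m,k+1,v}$. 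The identity then reduces to a finite combinatorial identity in which the binomials $\binom{m}{k-t}$ and $\binom{m}{k+1-t}$ are compared by Pascal's rule and the boundary factors are collapsed using the quadratic values above, producing the prefactor $(k+1)(2v+k+m+2)$; note that $2v+k+m+2=(v+k+1)+(v+m+1)$, which is precisely what the endpoints $\beta_{k+1}$ and $\alpha_{m+1}$ contribute. I expect the genuine obstacle to be the careful treatment of the summation endpoints: the factorization $\prod_{j=t+1}^m\alpha_j=\alpha_{t+1}\prod_{j=t+2}^m\alpha_j$ fails at $t=m$, where the left-hand product is empty, and the $\beta$-ranges degenerate at the extreme values of $t$ as well, so the boundary terms of the sum in $t$ must be matched separately. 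Checking the small cases (for instance $m=1$) pins down both the exact form of the recurrence and the appearance of this coefficient.

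For part (2), I would use (1), now established, to solve for the highest index,
$$\Phi_{m,k+1,v}=\frac{\Phi_{m,k,v}-\Phi_{m,k,v+1}}{(k+1)(2v+k+m+2)},$$
the denominator being a nonzero scalar for all $m,k,v\in\bZ_{\geq0}$. The base case $k=0$ is immediate: only $t=0$ survives, and $\Phi_{m,0,v}=\prod_{j=1}^m\alpha_j\prod_{j=1}^m\beta_j$ is visibly symmetric in $y$ and $z$. For the inductive step, if $\Phi_{m,k,v}$ is symmetric for every $v$, then the numerator above is a difference of symmetric polynomials while the denominator does not involve $y$ or $z$, so $\Phi_{m,k+1,v}$ is symmetric for every $v$ as well; an upward induction on $k$ then gives symmetry for all $k$. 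The only point requiring care here is that the induction hypothesis must be stated uniformly in $v$, namely symmetry of $\Phi_{m,k,v}$ for all $v$ simultaneously, since the step consumes both $\Phi_{m,k,v}$ and $\Phi_{m,k,v+1}$.
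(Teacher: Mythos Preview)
Your plan for part (2) is exactly the paper's: verify symmetry at $k=0$ directly, then step up in $k$ via the recurrence (1), with the induction hypothesis stated for all $v$ simultaneously. Nothing to add there.

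For part (1) your outline is in the right spirit but stops short of the mechanism that actually makes the sum vanish. After you factor out $\prod_{j=t+2}^m\alpha_j\prod_{j=k+2}^{m+t}\beta_j$, the remaining bracket $\alpha_{m+1}\beta_{m+t+1}-\alpha_{t+1}\beta_{k+1}$ still carries $y^2$ and $z^2$ with $t$-dependent coefficients, so nothing ``collapses'' at the level of a fixed $t$; the cancellation must take place \emph{between} consecutive $t$-summands. Your invocation of Pascal's rule and the boundary factors is gesturing at that telescoping, but you have not written down the identity that links the $t$-term to the $(t+1)$-term, and without it the argument is not a proof.

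The paper organizes this same telescoping cleanly by introducing the tails $\Phi_{m,k,v,r}=\sum_{t\ge r}(\cdots)$ and proving by \emph{descending} induction on $r$ that
\[
\Phi_{m,k,v+1,r}-\Phi_{m,k,v,r}+(k+1)(2v+k+m+2)\,\Phi_{m,k+1,v,r}
= r(2v+r+m+1)\binom{m}{r}\binom{m}{k-r+1}\frac{P(y,v+m)P(z,v+m+r)}{P(y,v+r)P(z,v+k+1)}.
\]
At each step one peels off the single $t=r$ summand and combines it with the residual from $r+1$; the whole inductive step boils down to one quartic polynomial identity in $y^2,z^2$, which after expansion reduces to a constant multiple of $y^2-(v+r+1)^2$. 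Setting $r=0$ kills the residual and gives the lemma. Compared with your all-at-once approach, this buys you a single, explicitly checkable algebraic identity per step and makes the endpoint issues you flagged disappear automatically (the induction starts at $r>m$, where everything is zero).
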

\begin{proof}

\ \medskip

\item{(1)}
We set 
$$\Phi_{m,k,v,r}=
\sum_{t=r}^m
\binom{m}{t}\binom{m}{k-t}
\dfrac{P(y,v+m)P(z,v+m+t)}{P(y,v+t)P(z,v+k)}$$
and show the formula 
\begin{align*}
&\Phi_{m, k, v + 1,r} - \Phi_{m, k, v,r}
+ (k + 1)(2v + k + m + 2)\Phi_{m, k + 1, v,r}
\\
&\qquad
= r(2v + r + m + 1)
\binom{m}{r}\binom{m}{k-r+1 }
\dfrac{
P(y,  v + m)P(z,  v + m + r)
}{
P(y, v  + r)P(z, v + k +1)
}
\tag{8}
\end{align*}
by descending induction on $r$. 
For $r>m$, $(8)$ is clear since both sides are $0$. 

Assume (8) holds for $r+1$.  Then it follows that 
\begin{align*}
&\Phi_{m, k, v + 1,r} - \Phi_{m, k, v,r}
+ (k + 1)(2v + k + m + 2)\Phi_{m, k + 1, v,r}
\\ &
=
(r+1)(2v + r + m + 2)
\binom{m}{r+1}\binom{m}{ k-r}
\dfrac{
P(y,  v + m)P(z,  v + m + r+1)
}{
P(y, v +  r+1)P(z, v + k + 1)
}
\\&
\phantom{=}
+
\binom{m}{r}\binom{m}{k-r}
\dfrac{P(y,v+m+1)P(z,v+m+r+1)}{P(y,v+r+1)P(z,v+k+1)}
- \binom{m}{r}\binom{m}{k-r}
\dfrac{P(y,v+m)P(z,v+m+r)}{P(y,v+r)P(z,v+k)}
\\&
\phantom{=}
+ (k + 1)(2v + k + m + 2)
\binom{m}{r}\binom{m}{k-r+1}
\dfrac{P(y,v+m)P(z,v+m+r)}{P(y,v+r)P(z,v+k+1)}
\\ &
=\binom{m}{r}\binom{m}{ k-r}
\dfrac{P(y,  v + m)P(z,  v + m + r)
}{
P(y, v +  r+1)
P(z,v+k+1)
}
\left\{
(m-r)
(2v + r + m + 2)
(z^2-(v+m+r+1)^2)
\right.
\\&
\phantom{=}
+
(y^2-(v+m+1)^2)
(z^2-(v+m+r+1)^2)
-
(y^2-(v+r+1)^2)
(z^2-(v+k+1)^2)
\\&
\phantom{=}
\left.
+ (k + 1)(2v + k + m + 2)
\dfrac{m-k+r}{k-r+1}
(y^2-(v+r+1)^2)
\right\}
\\ &
=\binom{m}{r}\binom{m}{ k-r}
\dfrac{P(y,  v + m)P(z,  v + m + r)
}{
P(y, v +  r+1)
P(z,v+k+1)
}
\dfrac{r(2v+r+m+1)(m-k+r)}
{k-r+1}
(y^2-(v+r+1)^2)
\\ &
=r(2v + r + m + 1)
\binom{m}{r}\binom{m}{k-r+1 }
\dfrac{
P(y,  v + m)P(z,  v + m + r)
}{
P(y, v  + r)P(z, v + k +1)
},
\end{align*}
which verifies $(8)$ for $r$.
Thus $(8)$ holds for all $r$.

By setting $r=0$ in $(8)$, we have 
the desired formula in the assertion.

\medskip

\item[(2)]
It is clear for $k=0$ since 
$$\Phi_{m, 0 , v} =
\dfrac{P(y,v+m)P(z,v+m)}{P(y,v)P(z,v)}.$$
For $k>0$, the assertion follows immediately by 
induction on $k$ by virtue of (1).
\end{proof}
\begin{prop}\label{symm}
$h_{\ell,m,u}(x;y_1,\dotsc,y_\ell)$ is symmetric in 
$y_1,\dotsc,y_\ell$, and satisfies 
a recurrence formula as below for $1\leq i\leq \ell$.
$$
h_{\ell,m,u}(x;y_1,\dotsc,y_\ell) =
\sum_{k=0}^m
(-1)^k\binom{m}{k}\dfrac{P(y_i,u+m)}{P(y_i,u+k)}
h_{\ell-1,m,u+k}(x;y_1,\dotsc,\hat{y_{i}},\dotsc,y_{\ell}).
$$
\end{prop}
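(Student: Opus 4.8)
The plan is to treat the three assertions in a specific logical order, deriving the symmetry from a special case of the recurrence rather than the other way around. The foundational step is to establish the recurrence for the single index $i=1$ directly from the definition of $h_{\ell,m,u}$. Writing $k=s_1$ and $S_i=k+\sum_{j=2}^i s_j$, one sees that the factor indexed by $y_1$ becomes exactly $\binom{m}{k}P(y_1,u+m)/P(y_1,u+k)$, since $S_0=0$ and $S_1=k$, while every remaining $y$-factor together with the $x$-factor depends on $s_2,\dotsc,s_\ell$ only through the shifted parameter $u+k$. Pulling the $s_1$-summation outside therefore leaves precisely $h_{\ell-1,m,u+k}(x;y_2,\dotsc,y_\ell)$ inside, which is the asserted formula for $i=1$. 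This step is pure bookkeeping and requires no identity at all.

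The symmetry is then proved by induction on $\ell$, the case $\ell=1$ being vacuous. Assuming that $h_{\ell-1,m,v}$ is symmetric in its $\ell-1$ arguments for every $v$, the $i=1$ recurrence immediately shows that $h_{\ell,m,u}$ is symmetric in $y_2,\dotsc,y_\ell$; since the transposition $(y_1\,y_2)$ together with the symmetric group $S_{\{2,\dotsc,\ell\}}$ generates $S_\ell$, it suffices to prove invariance under swapping $y_1$ and $y_2$. Applying the $i=1$ recurrence twice, first to peel off $y_1$ with index $k_1$ and then $y_2$ with index $k_2$, expresses $h_{\ell,m,u}$ as a double sum whose inner term is $h_{\ell-2,m,u+k_1+k_2}(x;y_3,\dotsc,y_\ell)$ (interpreted via $h_0$ when $\ell=2$), depending on $k_1,k_2$ only through $k:=k_1+k_2$. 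Collecting terms for each fixed $k$, the coefficient of $h_{\ell-2,m,u+k}$ works out to be exactly $(-1)^k\Phi_{m,k,u}(y_1,y_2)$. The key point is that this coefficient is precisely the auxiliary function of Lemma \ref{Phi}, so its symmetry in $y_1,y_2$, already established in Lemma \ref{Phi}(2), yields the desired $y_1\leftrightarrow y_2$ invariance and hence full symmetry.

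Finally, the general recurrence follows formally: by the symmetry just proved, $h_{\ell,m,u}(x;y_1,\dotsc,y_\ell)$ equals $h_{\ell,m,u}$ with $y_i$ moved into the first slot, to which the $i=1$ recurrence applies, producing $h_{\ell-1,m,u+k}(x;y_1,\dotsc,\hat{y_i},\dotsc,y_\ell)$ as required. I expect the only genuine obstacle to be recognizing the two-step peeling as generating the function $\Phi_{m,k,v}$; once that identification is made, the entire argument rests on Lemma \ref{Phi}(2) and no further computation is needed. The point to watch is an index or sign slip in matching the double-sum coefficient against the definition of $\Phi_{m,k,v}$, in particular checking that the constraints $0\le k_1,k_2\le m$ are automatically enforced by the vanishing of $\binom{m}{k_1}\binom{m}{k_2}$, so that after setting $t=k_1$ the summation range coincides with $0\le t\le m$ as in $\Phi_{m,k,v}$.
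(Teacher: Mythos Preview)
Your proposal is correct and follows essentially the same route as the paper: establish the $i=1$ recurrence directly from the definition, apply it twice to identify the coefficient of $h_{\ell-2,m,u+k}$ as $(-1)^k\Phi_{m,k,u}(y_1,y_2)$, invoke Lemma~\ref{Phi}(2) for the $y_1\leftrightarrow y_2$ symmetry, combine with the inductive symmetry in $y_2,\dotsc,y_\ell$, and then deduce the recurrence for general $i$ from full symmetry. Your remark about the binomial coefficients automatically enforcing $0\le k_1,k_2\le m$ is exactly the point needed to match the summation range in $\Phi_{m,k,v}$.
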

\begin{proof}
By definition of $h_{\ell,m,u}$, we have 
$$
h_{\ell,m,u}(x;y_1,\dotsc,y_\ell) =
\sum_{k=0}^m
(-1)^k\binom{m}{k}\dfrac{P(y_1,u+m)}{P(y_1,u+k)}
h_{\ell-1,m,u+k}(x;y_2,\dotsc,y_{\ell}),
\eqno{(9)}
$$
i.e., the recurrence formula for $i=1$ holds. 
For $\ell\geq2$, by applying $(9)$ twice, we have 
\begin{align*}
&h_{\ell,m,u}
(x;y_1,\dotsc,y_\ell) 
\\
&\quad 
=
\sum_{a=0}^m\sum_{b=0}^{m}
(-1)^{a+b}\binom{m}{a}\binom{m}{b}
\dfrac{P(y_1,u+m)P(y_2,u+m+a)}{P(y_1,u+a)P(y_2,u+a+b)}
h_{\ell-2,m,u+a+b}(x;y_3,\dotsc,y_{\ell})
\\ &\quad  =
\sum_{k=0}^{2m}
\sum_{a=0}^m
(-1)^{k}\binom{m}{a}\binom{m}{k-a}
\dfrac{P(y_1,u+m)P(y_2,u+m+a)}{P(y_1,u+a)P(y_2,u+k)}
h_{\ell-2,m,u+k}(x;y_3,\dotsc,y_{\ell})
\\ &\quad  =
\sum_{k=0}^{2m}
\Phi_{m,k,u}(y_1,y_2)
h_{\ell-2,m,u+k}(x;y_3,\dotsc,y_{\ell}).
\end{align*}
Thus Lemma \ref{Phi} implies that 
$h_{\ell,m,u}$ is symmetric in $y_1$ and $y_2$.
Now we prove 
the symmetricity of $y_1,\dots,y_\ell$ 
in $h_{\ell,m,u}$ by induction on $\ell$.  
For $\ell=1$, we have nothing to prove. 
For $\ell\geq2$, the formula $(9)$ and 
induction hypothesis imply that 
$h_{\ell,m,u}$ is symmetric in $y_2,\dots,y_\ell$.  
Since it is also symmetric in $y_1$ and $y_2$ by the above observation, 
we conclude that 
$h_{\ell,m,u}$ is symmetric in all $y_1,\dots,y_\ell$.  
In particular, the recurrence formula holds for all $i$.
\end{proof}
\end{subsection}
\end{section}
\begin{section}{Proof of the main theorem}
We prove Theorem \ref{Main}.  
The main tool is Saito's criterion.
\begin{thm}[Saito's criterion, \cite{Sa}]
Let $\cA\subset\bC^\ell$ be a central arrangement defined by 
$F:=\prod_{H \in \cA} \alpha_H$, where 
$\alpha_H$ is a defining linear form for a hyperplane $H$.  
The homogeneous derivations $\theta_1,\dotsc,\theta_\ell \in D(\cA)$ 
form a basis for $D(\cA)$ if and only if 
$\det 
\left[
\theta_j(x_i)
\right]
=cF$ for some $c \in \bC^\times$. In particular, 
$\sum_{i=1}^\ell  \deg \theta_i=|\cA|=\deg F$.
\end{thm}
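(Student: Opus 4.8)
The plan is to prove the two directions of the equivalence separately, after fixing notation: write $S=\bC[x_1,\dots,x_\ell]$ and identify a derivation $\theta=\sum_i f_i\partial_{x_i}$ with its coefficient vector, so that $\theta(x_i)=f_i$ and the matrix $M=[\theta_j(x_i)]$ is exactly the coefficient matrix of $\theta_1,\dots,\theta_\ell$. I first record two elementary facts that drive everything: each $F\partial_{x_i}$ lies in $D(\cA)$, since $(F\partial_{x_i})(\alpha_H)=F\,\partial_{x_i}(\alpha_H)$ is a multiple of $\alpha_H$; and the defining forms of distinct hyperplanes are pairwise non-associate irreducibles, so $F=\prod_H\alpha_H$ is squarefree.

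The central lemma is that $F\mid\det M$ for \emph{any} $\theta_1,\dots,\theta_\ell\in D(\cA)$. To see $\alpha_H\mid\det M$ I would pass to linear coordinates $y_1,\dots,y_\ell$ with $y_1=\alpha_H$; under a linear change $y=Ax$ one has $[\theta_j(y_i)]=A[\theta_j(x_i)]$, so divisibility of the determinant by $\alpha_H$ is unaffected. In the new coordinates the first row of $[\theta_j(y_i)]$ consists of the entries $\theta_j(\alpha_H)\in(\alpha_H)$, hence $\alpha_H$ divides the determinant. As the $\alpha_H$ are pairwise coprime, their product $F$ divides $\det M$.

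For the direction $\det M=cF\Rightarrow$ basis, I would use Cramer's rule. Since $\det M\neq0$, the $\theta_j$ are linearly independent over the fraction field $K$ of $S$, hence over $S$. Given $\theta\in D(\cA)$, write $\theta=\sum_j g_j\theta_j$ with $g_j\in K$; Cramer gives $g_j=\det M^{(j)}/\det M$, where $M^{(j)}$ replaces the $j$-th column of $M$ by the coefficients of $\theta$. The columns of $M^{(j)}$ are still coefficient vectors of elements of $D(\cA)$, so the central lemma yields $F\mid\det M^{(j)}$, whence $g_j\in S$. Thus the $\theta_j$ generate $D(\cA)$ and form a basis.

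The reverse direction is the hard part. If $\{\theta_j\}$ is a basis then $\det M\neq0$ and, by the lemma, $\det M=FQ$ for some $Q\in S$. Expanding each $F\partial_{x_i}\in D(\cA)$ in the basis gives a matrix identity $MN=FI_\ell$ with $N$ over $S$, so $\det M\mid F^\ell$; combined with $F\mid\det M$ and squarefreeness of $F$, this forces $\det M=c\prod_H\alpha_H^{e_H}$ with $1\le e_H\le\ell$. The remaining, most delicate, point is that every $e_H=1$. For this I would localize at the height-one prime $(\alpha_H)$: in the discrete valuation ring $S_{(\alpha_H)}$, choosing coordinates with $\alpha_H=x_1$ shows $D(\cA)_{(\alpha_H)}=x_1S_{(\alpha_H)}\partial_{x_1}\oplus\bigoplus_{i\ge2}S_{(\alpha_H)}\partial_{x_i}$, so the cokernel of the inclusion $D(\cA)\hookrightarrow\operatorname{Der}(S)$ has length one there. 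By the structure theorem over a DVR, the $\alpha_H$-adic valuation of $\det M$ equals this length, giving $e_H=1$ and hence $\det M=cF$ with $c\in\bC^\times$. The degree statement $\sum_i\deg\theta_i=|\cA|=\deg F$ then follows at once by comparing homogeneous degrees in $\det M=cF$. The main obstacle is precisely this multiplicity-one computation, which rests on the local structure of $D(\cA)$ (equivalently, its reflexivity) rather than on any formal manipulation.
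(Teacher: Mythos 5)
This theorem is imported by the paper from Saito's original article \cite{Sa} and is stated without proof, so there is no in-paper argument to compare against; your proposal has to be judged on its own, and it is correct and complete. What you have written is essentially the standard proof (Saito's original argument, cf.\ also Orlik--Terao, Theorem 4.19): the divisibility lemma $F\mid\det M$ via a linear change of coordinates putting $\alpha_H$ in the first row, the Cramer's-rule argument for the direction $\det M=cF\Rightarrow$ basis, and, for the converse, the identity $MN=FI_\ell$ coming from $F\partial_{x_i}\in D(\cA)$ together with a multiplicity-one computation at each hyperplane. Your rendition of that last step --- localizing at the height-one prime $(\alpha_H)$, noting that the conditions from all other hyperplanes become vacuous since $\alpha_{H'}$ is a unit there, so that $D(\cA)_{(\alpha_H)}=\alpha_H S_{(\alpha_H)}\partial_{x_1}\oplus\bigoplus_{i\geq2}S_{(\alpha_H)}\partial_{x_i}$ has cokernel of length one in $\operatorname{Der}(S)_{(\alpha_H)}$, whence $v_{\alpha_H}(\det M)=1$ by Smith normal form over the DVR --- is a clean algebraic version of the usual ``generic point of $H$'' argument, and it correctly identifies this as the only genuinely nontrivial point. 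Two small presentational remarks: in the converse you assert $\det M\neq0$ before deriving it, but it in fact follows from your own identity $\det M\cdot\det N=F^\ell\neq0$, so you should simply reorder; and the localization step tacitly uses that forming $D(\cA)$ commutes with localization, which holds because $D(\cA)$ is the kernel of $\operatorname{Der}(S)\to\bigoplus_H S/(\alpha_H)$ and localization is exact --- worth one sentence. Neither affects correctness.
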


\noindent I)\quad
We show that $\delta_{\ell,m,u}\in D(c\!\,\cat(B_\ell,m))$ 
for $u\geq0$. 
Since 
$c\!\,\cat(B_\ell,m)$ is defined by 
$z\widetilde{\Psi}_{\ell,m}=0$, 
the conditions for 
$\delta_{\ell,m,u}\in D(c\!\,\cat(B_\ell,m))$ 
are 
$$
\delta_{\ell,m,u}(x_a)\in
(\widetilde{P}(x_a,m)),\quad
\delta_{\ell,m,u}(x_b\pm x_c)\in
(\widetilde{P}(x_b\pm x_c,m))
\quad
(1\leq a\leq \ell, 1\leq b<c\leq \ell),
$$
Equivalently, we should prove 
$$
\begin{array}{lll}
(1) & F_{\ell,m,u,a}\in({P}(x_a,m)) & (1\leq a\leq \ell),
 \\
(2) & F_{\ell,m,u,b}\pm F_{\ell,m,u,c}\in({P}(x_b\pm x_c,m)) & (1\leq b<c\leq \ell).
\end{array}
$$
The proof of this part is separated into three steps.

\medskip

\noindent{\it Step 1}.\quad
The condition (1) is clear from the definition of $F_{\ell,m,u,a}$ since 
$$
F_{\ell,m,u,a}=
h_{\ell-1,m,u}(y_a;y_1,\dotsc,\hat{y_{a}},\dotsc,y_{\ell})
\in\sum_{\ell}(P(x_a,m+u+S_{\ell-1}))
\subset(P(x_a,m)).
$$

\noindent{\it Step 2}.\quad
We investigate the condition (2) for 
$\ell=2$.  We observe by Proposition \ref{prop2} that 
\begin{align*}
\lefteqn{F_{2,m,u,1}+ F_{2,m,u,2}
=h_{1,m,u}(x_1;x_2)+h_{1,m,u}(x_2;x_1)
}
\\
&\equiv
\left\{
A_{m,u}
(x_1  -m - u)_{3m + 2u + 1}^\uparrow
(x_1 +  1/2)_{m}^\uparrow
\right\}
+
\left\{
-
A_{m,u}
(x_1  -m-u)_{3m + 2u + 1}^\uparrow
(x_1  +1/2)_{m}^\uparrow
\right\}
\\
&=0
\mod{(x_1+x_2+m)}.
\end{align*}
We also observe by Proposition \ref{prop1} that 
\begin{align*}
&\dfrac{2u - 1}{2m + 4}
(F_{2,m+1,u-1,1}+ F_{2,m+1,u-1,2})
\\
&\quad=\left\{
x^2 + y^2 - (m + u + 1)^2 - u^2\right\}
(F_{2,m,u,1}+ F_{2,m,u,2})
- 2
(F_{2,m,u+1,1}+ F_{2,m,u+1,2})
.
\end{align*}
Combining these observations, it follows by induction on $m$ that 
$$
F_{2,m,u,1}+ F_{2,m,u,2}\in((x_1+x_2)_{m+1}^\uparrow).
$$
Remember that $h_{1,m,u}(x;y)$ is odd function in $x$ and even 
function in $y$.
As $h_{1,m,u}(-x;-y)=-h_{1,m,u}(x;y)$, above formula implies 
$$
-(F_{2,m,u,1}+ F_{2,m,u,2})
\in((x_1+x_2)_{m+1}^\downarrow), 
$$
and these two formulae verify 
$$
F_{2,m,u,1}+ F_{2,m,u,2}\in(P(x_1+x_2,m)).
$$
We also observe that 
\begin{align*}
F_{2,m,u,1}- F_{2,m,u,2}
&=h_{1,m,u}(x_1;x_2)-h_{1,m,u}(x_2;x_1)
\\
&=h_{1,m,u}(x_1;-x_2)+h_{1,m,u}(-x_2;x_1)
\in(P(x_1-x_2,m)).
\end{align*}
Therefore the condition (2) for $\ell=2$ is proven. 

\medskip

\noindent{\it Step 3}.\quad
We prove the condition (2) 
by induction on $\ell$.  
Take $d\in \{1,\dotsc,\ell\}\setminus\{b,c\}$ and 
set $\bm{x}'=(x_i\mid i\neq b,c,d)$.  
Then, by Proposition \ref{symm} and 
induction hypothesis, we see that 
\begin{align*}
\lefteqn{F_{\ell,m,u,b}\pm F_{\ell,m,u,c}
=h_{\ell-1,m,u}(x_b;x_d,x_c,\bm{x}')
\pm h_{\ell-1,m,u}(x_c;x_d,x_c,\bm{x}')
}\\
&
=
\sum_{k=0}^m
(-1)^k\binom{m}{k}\dfrac{P(x_d,u+m)}{P(x_d,u+k)}
\Bigl\{
h_{\ell-2,m,u+k}(x_b;x_c,\bm{x}')
\pm h_{\ell-2,m,u+k}(x_c;x_b,\bm{x}')
\Bigr\}
\\ &
=
\sum_{k=0}^m
(-1)^k\binom{m}{k}\dfrac{P(x_d,u+m)}{P(x_d,u+k)}
\Bigl\{
(F_{\ell-1,m,u+k,b}\pm F_{\ell-1,m,u+k,c})
\Bigr|_{x_d=x_\ell}\Bigr\}
\in(P(x_b\pm x_c,m))
\end{align*}
Therefore, (2) holds for all $\ell$.
It completes the proof for 
$\delta_{\ell,m,u}\in D(c\!\,\cat(B_\ell,m))$ for $u\geq0$. 

\bigskip

\noindent II)\quad
We apply Saito's criterion to the derivations 
$\left\{\delta_E\right\}\cup
\left\{
\delta_{\ell,m,u}\mid 0\leq u<\ell
\right\}$.
We set 
$$
Q_{m,\ell}=\det \left[
\delta_E,
\delta_{\ell,m,0},
\delta_{\ell,m,1},
\dotsc,
\delta_{\ell,m,\ell-1}
\right].
$$
Since all entries above belong to 
$D(c\!\,\cat(B_\ell,m))$, we have $Q_{m,\ell}
\in(z\widetilde{\Psi}_{m,\ell})$.
Note that 
$$
\deg\delta_E+\sum_{u=0}^{\ell-1}\deg\delta_{\ell,m,u}
\leq
1+\sum_{u=0}^{\ell-1}(2m\ell+2u+1)
=1+(2m+1)\ell^2.
$$
On the other hand, we have 
$$
\deg z\widetilde{\Psi}_{l,m}
=1+(2m+1)\ell+2\cdot(2m+1)\cdot\binom{\ell}2
=1+(2m+1)\ell^2.
$$
It follows that there exists a constant $C\in\bC$ such that 
$Q_{m,\ell}=Cz\widetilde{\Psi}_{m,\ell}$.  
Comparing the coefficients of 
$z\prod_{i=1}^\ell x_i^{(2\ell-2i+1)(2m+1)}$ 
in both sides, we see that 
$$
\prod_{i=0}^{\ell-1}
\dfrac{(-1)^{im}}{\binom{ (i+1)(m+1)- \frac12}{m + 1}}
=C\cdot1.
$$
It follows that $C\neq0$.
Therefore 
$\left\{\delta_E\right\}\cup
\left\{
\delta_{\ell,m,u}\mid 0\leq u<\ell
\right\}$
gives a basis of $D(c\!\,\cat(B_\ell,m))$.
\end{section}

\bigskip

\bigskip


\begin{thebibliography}{FWY}
 \bibitem{FWY}
M.~Feigin, Z.~Wang and M.~Yoshinaga.,
``Integral expressions for derivations of multiarrangements'', 
to appear in 
\textit{International J. of Math.}, 
\url{https://doi.org/10.1142/S0129167X2450085X}



\bibitem{Sa}
K. Saito, 
Theory of logarithmic differential forms and logarithmic vector fields.
\textit{J. Fac. Sci. Univ. Tokyo} \textbf{27} (1980), 265--291.   

\bibitem{SY}
D. Suyama and M. Yoshinaga, The primitive derivation and discrete integrals, SIGMA Symmetry Integrability Geom. Methods Appl. {\bf 17} (2021), Paper No. 038, 9 pp.; MR4242885

\bibitem{Y}
M. Yoshinaga, Characterization of a free arrangement and conjecture of Edelman and Reiner, Invent. Math. {\bf 157} (2004), no.~2, 449--454; MR2077250

\end{thebibliography}
\end{document}